\renewcommand{\epsilon}{\varepsilon}
\newtheorem{theorem}{Theorem}[section]
\newtheorem{lemma}[theorem]{Lemma}
\newtheorem{conjecture}[theorem]{Conjecture}
\theoremstyle{definition}
\numberwithin{equation}{section}
\numberwithin{theorem}{section}
\begin{document}

\title[The Hegselmann-Krause dynamics for continuous agents and a regular opinion function do not always lead to consensus] 
{The Hegselmann-Krause dynamics for continuous agents and a regular opinion function do not always lead to consensus}

\author{Edvin Wedin$^{1,2}$ \and Peter Hegarty$^{1,2}$} 
\address{$^1$Mathematical Sciences, Chalmers, 41296 Gothenburg, Sweden} 
\address{$^2$Mathematical Sciences, University of Gothenburg,  41296 Gothenburg, Sweden} 
\email{edvinw@student.chalmers.se}

\email{hegarty@chalmers.se}



\subjclass[2000]{93C55, 91F99, 26A99} \keywords{Opinion dynamics, Hegselmann Krause model, continuous agent model, regular function}

\date{\today}

\begin{abstract} 
We present an example of a regular opinion function which, as it evolves in accordance with the discrete-time Hegselmann-Krause bounded confidence dynamics, always retains opinions which are separated by more than two. This confirms a conjecture of Blondel, Hendrickx and Tsitsiklis.
\end{abstract}

\maketitle



\setcounter{section}{0} 

\setcounter{equation}{0} 

\section{Introduction}\label{sect:intro}

Recent years have seen an explosion in both the amount of data available to scientists of all persuasions and the computing power necessary to run simulations of mathematical models. There is a rapidly expanding vista for the application of mathematics to the life and social sciences. One major theme of this effort is {\em emergence}, the name given to the idea that patterns in the collective behaviour of large groups of interacting agents may be explicable even if each individual is assumed to obey only rules which are both simple and {\em local}, the latter meaning that each agent is only influenced by its close neighbours, in some appropriate metric. This idea is mathematically appealing, as it suggests a preference for deducing interesting theorems from simple hypotheses. Hence, even if the relevance of any particular mathematical model to ``reality'' may be a difficult and controversial issue, there is at least the promise of some beautiful new mathematical results and conjectures resulting from these efforts.  

The field of {\em opinion dynamics} is concerned with how human 
agents modify their opinions on social issues as a result of the influence of others. This paper is a contribution to the study of a particularly elegant and well-known mathematical model, the bounded confidence model of Hegselmann and Krause \cite{HK}, or simply the {\em HK-model} for brevity.  In the simplest formulation of the model, we have a finite number, say $N$, of agents, indexed by the integers $1,2,\dots,N$. The opinion of agent $i$ is represented by a real number $x(i)$, where the convention is that $x(i) \leq x(j)$ whenever $i \leq j$. The dynamics are as follows: There is a fixed parameter $r > 0$ such that, after each unit of time, every individual replaces their current opinion by the average of those which currently lie within distance $r$ of themselves. This is summarised by the formula
\begin{equation}\label{eq:update}
x_{t+1}(i) = \frac{1}{|\mathcal{N}_{t}(i)|} \sum_{j \in \mathcal{N}_{t}(i)} x_{t}(j),
\end{equation}
where $\mathcal{N}_{t}(i) = \{j : |x_{t}(j) - x_{t}(i)| \leq r \}$. As the dynamics is obviously unaffected by rescaling all opinions and the confidence bound $r$ by a common factor, we can assume without loss of generality that $r = 1$. 
\par Note that the HK-model seems to implicitly assume that each agent is aware of the opinions of all other agents, even if he chooses to ignore most of them when modifying his view. Hence the agents do not really obey ``local'' rules in the sense described earlier. In one sense, this is a matter of interpretation. For example, a conservatively inclined UK citizen may switch the channel whenever a member of the Labour party is giving an interview, or my keep watching but shake his head and mutter under his breath. In other words, the agent adopts strategies which both filter out unwelcome opinions and prevent him from being aware of them in the first place. On the other hand, the HK-model clearly assumes that an agent is aware of all opinions within his current confidence range. There are no restrictions imposed by, for example, geography, which prevent certain agents from sharing opinions a priori. Other important features of the model are that it is fully deterministic and that all agents act simultaneously. Hence the model differs in important respects from other famous models of opinion dynamics such as classical voter models \cite{V} or the Deffuant-Weisbuch model \cite{DNAW}. 

The update rule (\ref{eq:update}) is certainly simple to formulate, though the simplicity is deceptive. Associated to a given configuration $(x(1),\dots,x(N))$ of opinions is a {\em receptivity graph} $G$, whose nodes are the $N$ agents and where an edge is placed between agents $i$ and $j$ whenever 
$|x(i) - x(j)| \leq 1$. The transition in the configuration from time $t$ to time $t+1$ is determined by this graph at time $t$. However, it is clear from (\ref{eq:update}) that the graph will in general vary with time. In algebraic terms, the dynamics are governed by a time-dependent stochastic matrix. This time dependence is the basic reason why many beautiful conjectures about the HK-model remain unresolved, as we shall now explain.        
\\
\par We begin with the necessary notation and terminology. The state space for a system of $N$ agents obeying the HK-dynamics is the set of non-decreasing functions $x: \{1,2,\dots,N\} \rightarrow \mathbb{R}$, equivalently, the set of vectors $(x(1),\cdots,x(N)) \in \mathbb{R}^{N}$ such that $x(i) \leq x(j)$ whenever $i \leq j$. 
An {\em equilibrium} state is one such that $|x(i) - x(j)| > 1$ whenever $x(i) \neq x(j)$. Clearly, once an equilibrium state is reached, then the opinion of every agent will be frozen for all future time. It is also easy to see that the converse holds: if $x_{t+1}(i) = x_t (i)$ for all $i$, then $x_t$ must be an equilibrium state. Any set of agents sharing a common opinion are referred to as a {\em cluster}. By a slight abuse of terminology, the term ``cluster'' may refer either to the set of agents with a certain opinion or the real number representing that opinion. Hence, a HK-system is in equilibrium if and only if no two clusters are within unit distance of each other. The simplest kind of equilibrium state is a {\em consensus}, in which there is only one cluster. Given a cluster $c \in \mathbb{R}$, its weight $w(c)$ is the number of agents sharing opinion $c$. A {\em stable equilibrium} is one in which, for any two clusters $a$ and $b$,  
\begin{equation}\label{eq:stable}
|b-a| \geq 1 + \frac{\min \{w(a), w(b) \}}{\max \{w(a), w(b) \}}.
\end{equation}
This last notion was introduced in \cite{BHT1}, which is the paper that directly inspired the present work. 

The two fundamental facts about the HK-model are the following: 
\\
\\
(A) Any initial state will evolve to equilibrium within a finite time.
\\
(B) Even if the receptivity graph is initially connected, the subsequent equilibrium state need not be a consensus.  
\\
\par Fact (A) seems to have been rediscovered several times over and there are a number of different proofs in the literature. Indeed, the same fact has been proven for a wide class of models of which HK is just one particularly simple example, see \cite{C}. Some of the known proofs of (A) give effective bounds for the time taken to reach equilibrium, as a function of the number $N$ of agents only. The best-known bound is $O(N^3)$, which was proven independently in \cite{BBCN} and \cite{MT}. It is speculated{\footnote{We purposely do not use the word ``conjectured'' here, as we have not seen this hypothesis explicitly stated as a conjecture anywhere in the literature. On the other hand, several authors refer to the example of equally spaced initial opinions, and we have not seen anyone suggest that there may be worse cases than this one.}}, however, that equilibrium is always reached within $O(N)$ steps, and that the worst-case scenario is given by the initial state{\footnote{Simulations suggest that, for large $N$, the initial state $\mathcal{E}_N$ reaches equilibrium in about $cN$ steps, where $c$ is a constant slightly greater than $0.8$.}} $\mathcal{E}_N = (1,2,\dots,N)$. This is an important open problem in the field. 

Regarding (B), it is easy to see that consensus may not be achieved if the initial distribution of opinions is very uneven. For example, suppose we have $100$ agents and the initial state is 
\begin{equation}\label{eq:uneven}
x_0 (i) = \left\{ \begin{array}{lr} -1, & 1 \leq i \leq 98, \\ 0, & i = 99, \\ +1, & i = 100. \end{array} \right.
\end{equation} 
At $t = 1$, the opinion of agent 99 will be pulled very close to $-1$, while agent 100 will only modify his opinion to $x_{1}(100) = 1/2$. Thus, agent 100 will now be isolated from everyone else and will form a cluster by himself in the equilibrium configuration. What is more interesting is that consensus may not emerge even when there is no such unevenness in the initial configuration. The simplest example is the initial state $\mathcal{E}_6$. A direct computation shows that the resulting equilibrium consists of clusters at $\frac{4613}{1728}$ and $\frac{7483}{1728}$, each of weight three. At this point it seems natural to ask what a ``typical'' equilibrium state looks like. In order to make this question precise, let us fix a parameter $L$ and suppose that the initial opinions $x_1(0),\dots,x_N(0)$ are chosen independently and uniformly at random from the interval $[0,\,L]$. The following two conjectures are supported by overwhelming numerical evidence:

\begin{conjecture}\label{conj:critical}
With opinions chosen initially as just described, let $p_{L,N}$ denote the probability that the resulting equilibrium is a consensus. Then there exists a critical value $L_c$\footnote{In \cite{F}, simulations are presented which suggest that $L_C$ is close to 5.} such that, as $N \rightarrow \infty$, $p_{L,N} \rightarrow 1$ whenever $L < L_c$ and $p_{L,N} \rightarrow 0$ whenever $L > L_c$. 
\end{conjecture}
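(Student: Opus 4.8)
The plan is to pass to the large-$N$ limit and reduce the conjecture to a deterministic statement about a continuum, density-based version of the dynamics, and then to transfer the resulting threshold back to the finite random model. Throughout, encode a configuration by its empirical measure $\mu_t^{N} = \frac{1}{N}\sum_{i=1}^{N}\delta_{x_t(i)}$. Since the initial opinions are i.i.d.\ uniform on $[0,L]$, the law of large numbers gives that $\mu_0^N$ converges almost surely, in the weak topology and in Wasserstein distance, to the uniform measure $\mathcal{U}_{[0,L]}$. The update rule (\ref{eq:update}) can be written intrinsically in terms of the empirical measure: an agent at position $x$ is sent to the centre of mass of $\mu_t^N$ restricted to $[x-1,x+1]$. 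Denoting this map on measures by $T$, one expects, for each fixed $t$, that $\mu_t^N \to \mu_t := T^t \mathcal{U}_{[0,L]}$, the orbit of $\mathcal{U}_{[0,L]}$ under the deterministic continuum dynamics.

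First I would analyse the continuum orbit $(\mu_t)_{t\ge 0}$ starting from $\mathcal{U}_{[0,L]}$. By symmetry the orbit remains symmetric about $L/2$. For small $L$ the support has diameter at most $1$, so every point sees every other and the orbit collapses to the single atom $\delta_{L/2}$ in one step; for large $L$ the extreme points cannot feel the centre and the support must split into pieces separated by distance greater than $1$, precluding consensus. This makes $L_c := \sup\{L : T^t\mathcal{U}_{[0,L]} \text{ converges to a single atom}\}$ well defined and finite, consistent with the simulations reporting $L_c \approx 5$. The substantive claim is that consensus holds for \emph{every} $L<L_c$ and fails for \emph{every} $L>L_c$, i.e.\ that the consensus set of parameters is genuinely the interval $[0,L_c)$; for $L<L_c$ this should moreover happen in a \emph{bounded} number of steps $t^\ast=t^\ast(L)$, after which $\mu_{t^\ast}$ has diameter below $1$ and one further step produces an atom.

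Second I would transfer this back to the finite model. For $L<L_c$, if $\mu_t^N$ stays uniformly close to $\mu_t$ over the finitely many steps $0,1,\dots,t^\ast$, then with probability tending to $1$ the configuration $x_{t^\ast}$ has diameter below $1$; its receptivity graph is then complete, so the following step merges all agents into a single cluster, giving consensus and hence $p_{L,N}\to 1$. For $L>L_c$ the same closeness forces the finite configuration to separate into groups more than distance $1$ apart, so $p_{L,N}\to 0$. Crucially, the continuum collapse occurs in a number of steps independent of $N$, which is what lets a fixed-$t$ convergence statement suffice even though the true equilibration time from Fact~(A) grows with $N$.

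The hard part will be two-fold. The map $T$ is \emph{discontinuous} in the weak topology, precisely because the neighbour set $\{j:|x_t(j)-x_t(i)|\le 1\}$ jumps when an opinion crosses the confidence boundary; controlling these boundary crossings — showing that, with high probability, only a negligible fraction of agents sit near a boundary at any relevant step, so that $\mu_t^N$ does not drift macroscopically away from $\mu_t$ — is the technical crux, and it is intimately related to the delicate edge effects that the present paper exploits to \emph{defeat} consensus. The second, and deeper, difficulty is establishing that the consensus set in $L$ is an interval: the HK dynamics is not monotone in general, so one cannot simply couple smaller and larger initial intervals, and ruling out re-entrant (non-monotone) behaviour in $L$ is exactly the obstruction that keeps this a conjecture rather than a theorem.
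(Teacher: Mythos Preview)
The statement you are attempting to prove is labelled a \emph{Conjecture} in the paper, and the paper offers no proof of it whatsoever. The authors explicitly remark that they ``have not seen Conjecture~\ref{conj:critical} stated explicitly anywhere, though it is implicit in the statements of many different authors,'' and they cite only numerical evidence (simulations in \cite{F} suggesting $L_c\approx 5$). There is therefore nothing in the paper against which to compare your argument.

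Your proposal is not a proof but an honest outline of a possible strategy, and you yourself identify the two genuine obstructions. Both are real. First, the discontinuity of the update map $T$ in the weak topology is not a technicality that can be tamed by showing ``only a negligible fraction of agents sit near a boundary'': the boundary agents are precisely what drive the macroscopic splitting, as the paper's own example (\ref{eq:uneven}) and indeed its main construction illustrate. A single agent near a confidence boundary can detach a positive-mass piece, so propagating $\mu_t^N\to\mu_t$ even for a bounded number of steps is already a hard problem. Second, and more seriously, you have no argument that the consensus set in $L$ is an interval. Your definition $L_c=\sup\{L:\text{consensus}\}$ is fine, but nothing you wrote excludes the possibility that consensus fails for some $L_1<L_c$; the HK dynamics is not monotone in the initial spread, and you correctly note there is no obvious coupling. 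Without this monotonicity the entire threshold statement collapses. Your closing sentence concedes exactly this, so what you have written is a fair assessment of why the conjecture is open, not a proof of it.
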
   

\begin{conjecture}\label{conj:2R}
With opinions chosen initially as described above, let $q_{L,N}$ denote the probability that the resulting equilibrium is stable, in the sense of (\ref{eq:stable}). Then for any fixed $L$, $q_{L,N} \rightarrow 1$ as $N \rightarrow \infty$.
\end{conjecture}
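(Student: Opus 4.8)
The plan is to pass to the large-$N$ limit and reduce the stability of the finite equilibrium to a statement about a deterministic limiting cluster profile. First I would set up a hydrodynamic description: for opinions drawn i.i.d.\ from the uniform law on $[0,L]$, the empirical measure $\mu_t^N = \tfrac1N\sum_i \delta_{x_t(i)}$ should, by a law-of-large-numbers/propagation-of-chaos argument, concentrate as $N\to\infty$ around a deterministic measure-valued flow $\mu_t$ governed by the continuum analogue of (\ref{eq:update}). Since opinions remain in the convex hull of the initial data and clusters are pairwise more than $1$ apart, the number of equilibrium clusters is bounded by $\lfloor L\rfloor+1$ independently of $N$; hence each surviving cluster $c$ has a well-defined limiting normalised mass $m(c)=\lim_{N\to\infty} w(c)/N \in [0,1]$. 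The heavy clusters ($m(c)>0$) converge to the atoms of the limiting equilibrium $\mu_\infty$, while the light clusters ($m(c)=0$) are invisible in the limit and must be tracked separately. The inequality (\ref{eq:stable}) involves only finitely many adjacent pairs, so it is essentially a closed condition on these positions and normalised weights, and the goal is to prove the \emph{strict} bound $|b-a| > 1 + \min\{m(a),m(b)\}/\max\{m(a),m(b)\}$ for every adjacent heavy pair and then transfer it back to finite $N$ by concentration.

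The second step is a case analysis on adjacent pairs $a<b$, organised by mass into \emph{heavy--heavy}, \emph{heavy--light}, and \emph{light--light}. For a heavy--light pair one has $\min/\max = w_{\min}/w_{\max}\to 0$, so (\ref{eq:stable}) reduces to the equilibrium condition $|b-a|>1$ plus a margin of order $w_{\min}/w_{\max}$; I would show this margin is $o(1)$ and is dominated by the gap's excess over $1$ with high probability. For a heavy--heavy pair the danger is the marginal regime: in the dense limit two heavy neighbours of nearly equal mass sit at a distance close to $2$, which is exactly the boundary $1+\min/\max$ when the masses coincide. Here I would need a sharper, fluctuation-level estimate showing that the typical inter-cluster gap exceeds $2$ by a definite amount --- the empirically observed spacing is noticeably larger than $2$ --- so that the inequality holds with room to spare. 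The light--light case, two small clusters lodged close together, should be excluded as an atypical local event using the independence and uniformity of the initial data.

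The hard part will be Step~2 for the heavy pairs, because it demands quantitative control of \emph{where} clusters form and \emph{how much mass} they carry in the limiting dynamics --- precisely the statistics that the still-open ``$2R$'' phenomenology describes but which have resisted rigorous proof. In particular, establishing that heavy neighbours are separated by strictly more than $2$ (rather than exactly $2$) seems to require genuine understanding of the limiting equilibrium spacing, and the near-boundary nature of the equal-mass case means the conclusion is sensitive to second-order fluctuations of the weights, so a central-limit-type refinement of the concentration in Step~1 would likely be needed. I regard obtaining this quantitative description of the limiting cluster profile, together with matching fluctuation bounds, as the principal obstacle; the reduction in Step~1 and the light-cluster pairs in Step~2 are comparatively routine once the limit is in place.
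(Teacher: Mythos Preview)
The paper does not prove this statement: it is explicitly labelled a \emph{conjecture} and is attributed (as a special case) to Conjecture~1 of \cite{BHT2}. There is therefore no proof in the paper to compare your proposal against. What the paper actually establishes is the far more modest Theorem~\ref{theorem:main}, the existence of a single regular continuum initial state that never reaches consensus.

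Your proposal is not a proof but a programme, and you yourself correctly identify the gap. Step~1 (hydrodynamic limit / propagation of chaos for the HK dynamics) is already nontrivial and not available in the literature in the form you need; the results in \cite{BHT1,BHT2} for the continuum model do not include a rigorous finite-$N$-to-continuum convergence theorem for the full dynamics and its equilibrium. More seriously, your heavy--heavy case in Step~2 requires proving that adjacent limiting clusters are separated by strictly more than $2$, which is precisely the unresolved ``$2R$'' phenomenon that motivates the conjecture in the first place; assuming it would be circular, and you offer no mechanism for deriving it. The near-critical case of two equal-mass neighbours at distance close to $2$ is exactly the boundary of (\ref{eq:stable}), so even with a hydrodynamic limit in hand you would need a quantitative spacing result that nobody currently knows how to prove. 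In short, the obstacle you flag at the end is not a technical hurdle within an otherwise sound argument --- it \emph{is} the conjecture.
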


We have not seen Conjecture \ref{conj:critical} stated explicitly anywhere, though it is implicit in the statements of many different authors. Conjecture \ref{conj:2R} is a special case of Conjecture 1 in \cite{BHT2}. They conjecture that the equilibrium state is almost surely stable under the weaker assumption that the initial opinions are chosen independently from any continuous and bounded pdf on $[0,\,L]$ with connected support, and not just the uniform distribution. Indeed, it is expected that, when the initial distribution is uniform, then the clusters at equilibrium will typically have about the same weight and hence, if (\ref{eq:stable}) holds, will typically be separated by at least two. This hypothesis is referred to in the literature as the {\em $2r$ conjecture}. We are not aware of anyone having turned this hypothesis into a precise conjecture, however. The reason for this is probably that, at least as far as can be told from simulations to date, the distribution of cluster sizes arising from a uniform initial distribution of opinions appears to be quite subtle. 

In an attempt to better understand the behaviour of the HK-model for a large number of agents, Blondel, Hendrickx and Tsitsiklis introduced \cite{BHT1} a {\em continuous agent} version of the model. Here the uncountably many agents are indexed by numbers in the closed interval $[0,\,1]$ and the state space consists of non-decreasing, bounded functions $x: [0,\,1] \rightarrow \mathbb{R}$. The analogue of (\ref{eq:update}) 
is
\begin{equation}\label{eq:contupdate}
x_{t+1}(\alpha) = \frac{1}{\mu(\mathcal{N}_{t}(\alpha))} \int_{\mathcal{N}_{t}(\alpha)} x_{t} (\beta) \, d\beta,
\end{equation}
where $\mathcal{N}_{t}(\alpha) = \{ \beta : |x_{t}(\beta) - x_{t}(\alpha)| \leq 1\}$ and $\mu$ denotes Lebesgue measure. Of course, for (\ref{eq:contupdate}) to even make sense we must assume that the state space contains only Lebesgue measurable functions. An {\em equilibrium state} in this setting is a 
function attaining only finitely many values, such that the difference between any two such values exceeds one whenever both are attained on sets of positive measure. Stable equilibrium can be defined as in (\ref{eq:stable}), where now $w(c) = \mu(x^{-1}(c))$, and the inequality is required to hold whenever both clusters have positive weight. In particular, consensus means a constant function, whereas any equilibrium state which is not a consensus is represented by a discontinuous function. It is natural to assume that the initial state $x_0$ is continuous, however. Intuitively, $x_{0}$ should be injective and $x_{0}^{-1}$ should correspond to the cdf in Conjecture \ref{conj:2R}. This suggests restricting attention to initial states which are $C^1$. On the other hand, there are simple examples where $x_0 \in C^1$ but $x_1$ has corners. For example, suppose $x_{0}(\alpha) = 3\alpha$. An easy computation yields
\begin{equation}\label{eq:corner}
x_{1}(\alpha) = \left\{ \begin{array}{lr} \frac{3\alpha + 1}{2}, & 0 \leq \alpha \leq \frac{1}{3}, \\ 3\alpha, & \frac{1}{3} \leq \alpha \leq \frac{2}{3}, \\ \frac{3\alpha}{2} + 1, & \frac{2}{3} \leq \alpha \leq 1. \end{array} \right.
\end{equation}
Hence $x_1$ is not differentiable at $\alpha = \frac{1}{3}$ and $\alpha = \frac{2}{3}$. We shall assume henceforth that the initial state is {\em regular}, by which we mean that it is almost everywhere $C^1$, with strictly positive lower and upper bounds on its derivative where it ex1ists. This is a slight strengthening of the notion of regularity as defined in \cite{BHT2}. 
\par In contrast to the discrete case, it is not clear whether any initial state will reach equilibrium in finite time. Indeed, since any continuous function 
will obviously remain so when updated according to (\ref{eq:contupdate}), no regular initial state can reach a non-consensus equilibrium in finite time. In \cite{BHT1}, it is conjectured that a regular initial state $x_0$ will converge almost everywhere to a stable equilibrium, that is: there is a stable equilibrium $x_{\infty}$ such that, for each $\varepsilon > 0$ there is a $T_{\varepsilon} > 0$ such that $\mu (\{\alpha: |x_{t}(\alpha) - x_{\infty}(\alpha)| > \varepsilon \}) < \varepsilon$ for all $t > T_{\varepsilon}$. They prove a weaker statement in \cite{BHT2}, but this fundamental conjecture about the continuous agent model remains open. Note, however, that Hendrickx and Olshevsky have recently \cite{HO} proven a corresponding conjecture for another variation on the model where time is also treated as continuous.
\par Thus, for the continuous agent model, a regular initial state will reach equilibrium in finite time if and only if that equilibrium is a consensus -- more precisely, $x_{t+1}$ will be constant if and only if $x_{t}(1) - x_{t}(0) \leq 1$.
This brings us to perhaps the most curious aspect of the continuous agent model, namely it is not obvious that there is any regular initial state which does not reach consensus. The existence of such states was conjectured in \cite{BHT1}, but they could give no example with proof. More precisely, Conjecture 3 of \cite{BHT1} postulates the existence of a regular $x_0$ such that 
\begin{equation}\label{eq:overtwo}
x_{t}(1) - x_{t}(0) \geq 2 \;\; {\hbox{for all $t$}}.
\end{equation}
The motivation for this restriction is that they could prove that, if (\ref{eq:overtwo}) holds, then $x_t$ is regular for all $t$. Our contribution here will be to prove this conjecture:

\begin{theorem}\label{theorem:main}
There exists a regular function $x_0 : [0,\,1] \rightarrow \mathbb{R}$ such that, if the sequence $(x_t)_{t \in \mathbb{N}}$ is defined according to (\ref{eq:contupdate}), then $x_{t}(1) - x_{t}(0) > 2$ for all $t$.
\end{theorem}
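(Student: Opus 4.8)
\section*{Proof proposal}

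The plan is to build $x_0$ with a reflection symmetry about the midpoint $\alpha = \tfrac12$, say $x_0(1-\alpha) = -x_0(\alpha)$, and to exploit the fact that the update rule (\ref{eq:contupdate}) commutes with this symmetry. Since a symmetric state stays symmetric, one has $x_t(\tfrac12) = 0$ and $x_t(1-\alpha) = -x_t(\alpha)$ for all $t$, so that $x_t(1) - x_t(0) = 2x_t(1)$. Hence it suffices to construct the top half of the profile so that $x_t(1) > 1$ for every $t$; the condition $x_t(1) > 1$ also guarantees, via the symmetry, that the neighbourhood $\mathcal{N}_t(1) = \{\beta : x_t(\beta) \ge x_t(1) - 1\}$ of the extreme top agent never reaches across the midpoint into the mirror mass, so the two halves evolve without interacting.

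Next I would reduce the theorem to a summability statement. The top agent $\alpha = 1$ is the maximum, so all of its neighbours lie below it and $x_{t+1}(1) < x_t(1)$ strictly; the sequence $(x_t(1))_t$ is therefore decreasing, and the theorem is equivalent to
\[
\sum_{t \ge 0}\bigl(x_t(1) - x_{t+1}(1)\bigr) < x_0(1) - 1 .
\]
Writing the one-step decrement as $x_t(1) - x_{t+1}(1) = \mu(\mathcal{N}_t(1))^{-1}\int_{\mathcal{N}_t(1)}\bigl(x_t(1) - x_t(\beta)\bigr)\,d\beta$, one sees it is small precisely when the agents within opinion-distance $1$ of the top are concentrated just below it. A direct computation with the ``inverse density'' $1/x_t'$ shows that if the slope is taken near the regularity lower bound on a thin opinion-layer $[x_t(1) - \eta,\, x_t(1)]$ of width $\eta$ at the top, while the slope is taken very large (carrying almost no mass) immediately below, then $x_t(1) - x_{t+1}(1)$ is of order $\eta$.

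The construction would then aim for a \emph{self-similar} top profile: one for which the dense anchoring layer at the top is reproduced by the dynamics after each step, but with its width $\eta_t$ shrinking geometrically, $\eta_{t+1} \le c\,\eta_t$ with $c < 1$. If this can be arranged, the total descent is dominated by a convergent geometric series, and by choosing the starting width and the value $x_0(1)$ appropriately one forces the sum to stay below $x_0(1) - 1$, giving $x_t(1) > 1$ for all $t$. Regularity of every $x_t$ is then automatic: the profile is bi-Lipschitz by design, and the fact quoted before Theorem~\ref{theorem:main} guarantees that (\ref{eq:overtwo}) propagates regularity.

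The main obstacle is the third step: controlling the \emph{time evolution} of the near-endpoint profile rather than a single update. Estimating one decrement is elementary, but the map (\ref{eq:contupdate}) is nonlocal and its neighbourhoods depend on the current state, so a single-step bound cannot simply be iterated --- the shape of the density just below the top changes at each step. The heart of the proof must be to identify a class of top profiles that is (approximately) invariant under the dynamics, or a tractable auxiliary model that can be compared with the true one via a monotonicity argument, so that the geometric decay $\eta_{t+1} \le c\,\eta_t$ is guaranteed for all $t$ and not merely initially. Making this invariance or comparison rigorous, in the face of the state-dependent neighbourhoods, is where the real work lies.
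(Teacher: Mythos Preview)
Your high-level framework coincides with the paper's: exploit the anti-symmetry so that it suffices to control one end, and reduce the theorem to a summability statement by showing that the one-step drift of the extreme opinion is dominated by a geometrically decaying quantity. The paper does exactly this, tracking the mean $\bar A_t$ of a growing ``tail'' set $A_t$ rather than $x_t(1)$ itself, but the logic is the same.

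Where your proposal stops is precisely where the paper does the real work, and the construction you sketch is the one the authors say they could \emph{not} push through. You picture a dense low-slope layer at each extreme joined by a single steep bridge --- a ``single-S'' --- and hope the top layer reproduces itself with geometrically shrinking width. The authors report in their discussion that they tried this shape first and were unable to prove (\ref{eq:overtwo}) for it. Their fix is a \emph{double-S}: an additional narrow low-slope plateau $C$ at the very centre, between the two steep bridges $B$ and $D$. This middle mass is what makes the induction close. Agents in the steep region $B_t$ see all of $C$, and the computation in Lemma~\ref{lem:nästab} shows that the presence of $C$ pulls the upper end of $B_t$ past the top of $\mathcal B$ in one step, so only agents whose current opinion lies in the thin window $\mathcal A_t + 1$ can land in $B_{t+1}$. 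Without $C$, there is nothing to force the top of the bridge to clear $\mathcal B$, and the inclusion $B_{t+1}\subseteq B_t$ --- on which the whole geometric decay rests --- is not available.

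The second missing ingredient is the mechanism for the geometric rate. The paper does not try to show that a profile class is invariant; instead it tracks two scalar quantities, an upper bound $e_t$ on $x_t'$ over the tail $A$ and a lower bound $s_t$ on $x_t'$ over the bridge $B_t$, and uses the explicit derivative formula of Lemma~\ref{lem:derivata} to prove the coupled recursions $e_{t+1}\le \tfrac{2}{|A|}\,e_t/s_t$ and $s_{t+1}\ge \tfrac{\varepsilon}{2|A|}\,s_t/e_t$. These force $s_t$ to grow geometrically, hence $|B_t|\le d/s_t$ to shrink geometrically, and the increments $\bar A_{t+1}-\bar A_t \le 4|B_t|/|A|$ are then summable. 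Your proposal names this step as the main obstacle but offers no candidate invariant; the paper's answer is that the right invariant is not a shape class at all but the pair $(e_t,s_t)$, and this only works because the extra plateau $C$ guarantees $B_{t+1}\subseteq B_t$ at every step.

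A minor point: your sentence ``the two halves evolve without interacting'' is not correct as stated. The condition $x_t(1)>1$ only says the extreme agent's neighbourhood misses the midpoint; agents near the centre still see across it. What is true, and what you actually need, is that the anti-symmetry is preserved by the update --- that holds regardless of interaction.
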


Section \ref{sect:proof} contains a proof of this result and Section \ref{sect:discuss} contains a discussion of some open problems. 

\setcounter{equation}{0}

\section{Proof of Main Theorem}\label{sect:proof}

The opinion function to be described below will converge pointwise to a non-regular stable state with 3 clusters of positive weight, and the construction can be extended to allow convergence to (at least) any odd number of such clusters.

\begin{figure}[h]
  \begin{center}
    \includegraphics[scale=0.6]{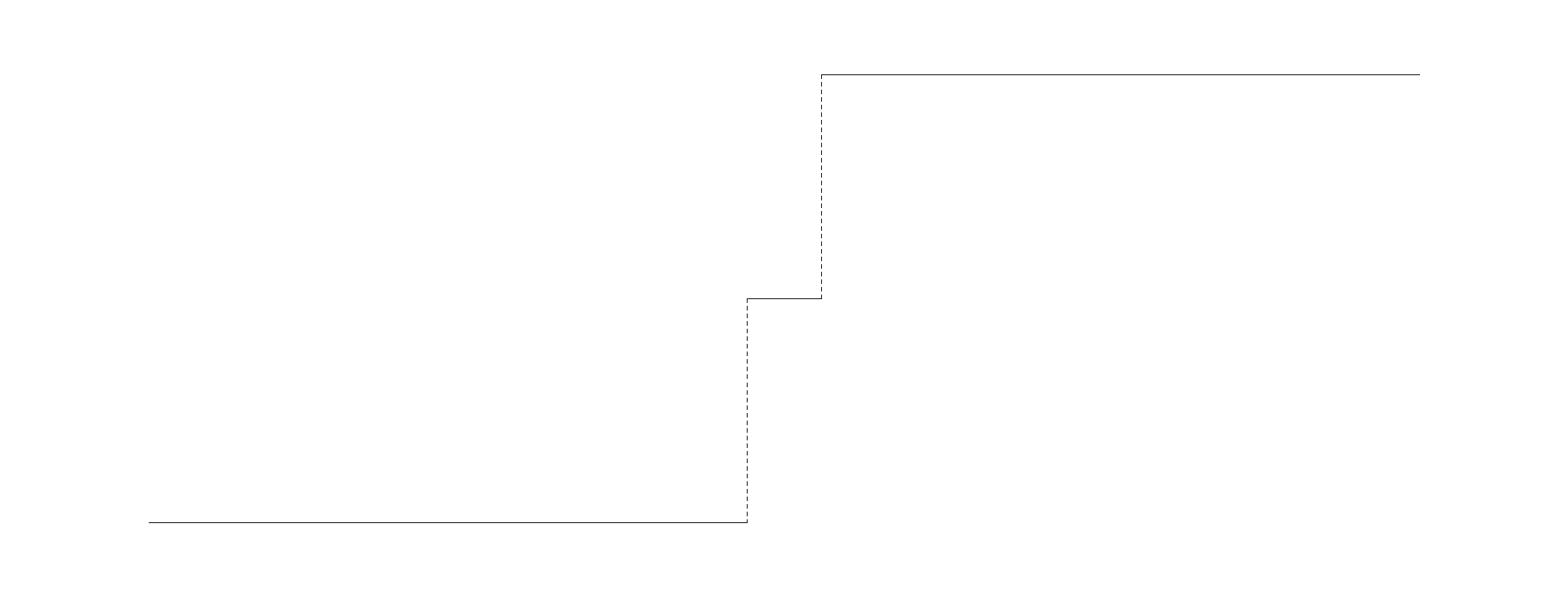}
    \caption{The shape to which the opinion functions will converge}
    \label{Skurva}
  \end{center}
\end{figure}


Let $I=[0,\,1]$.  To construct our initial state $x_0$, we first partition the set $I$ into closed intervals $A$, $B$, $C$, $D$ and $E$, each overlapping the next in exactly one point. For a small $\varepsilon$ that will be specified later, we also require $|B|=|D|=\varepsilon^2 |C|$ and $|C|= \varepsilon^2 |A|=\varepsilon^2 |E|$,  where $|\cdot|$ denotes the length of an interval, so that the endpoints of the intervals lie symmetrically around the centre $c = \frac{1}{2}$ of $C$. We also define $\mathcal{A} = [0,\, \varepsilon]$, $\mathcal{B} = [\varepsilon,\, d+\varepsilon]$ and $\mathcal{C} = [d+\varepsilon,\, d+2\varepsilon]$ for some $d \in (1,\, 2)$ to be subsets of the opinion space. How small $\varepsilon$ needs to be will depend, among other things, on the choice of $d$, so to save us some bookkeeping, we will fix $d=\frac{3}{2}$. These sets are fixed and do not depend on time.

There is some freedom in the construction of the initial state, and in some places conditions will be given along with an example. We will often present an example without explicit bounds on how much it can be varied.

First of all, the initial opinion function $x_0$ is defined to be anti-symmetric about $c$. By this is meant that if coordinates were to be chosen so that $c=0$ and $x_0(c)=0$, then $x_0$ would be odd. From this symmetry, it easily follows that $x_t(c)=x_0(c)$ for all time steps $t$, and that the anti-symmetry remains. 

Second, $x_0$ is defined so that $x_0(A)=\mathcal{A}$, $x_0(B)=\mathcal{B}$ and $x_0(C)=\mathcal{C}$. This will give $x_0$ the shape of a ``double S'', with a small plateau in the middle and two long tails, see Figure 
\ref{Skurva}
. Not only should $x_0$ stay within these ``boxes'', illustrated in Figure 
\ref{box1}
, but we require also that the derivative fulfil that $x_0'(\alpha)|_A \leq e_0$ and  $x_0'(\alpha)|_B \geq s_0$ for suitable constants $e_0$ and $s_0$. In this example, we will take $x_0$ to be linear on all intervals $A$, $B$, $C$, $D$ and $E$, and $e_0$ and $s_0$ to equal the derivatives on $A$ and $B$, respectively, so that $e_0=\frac{\varepsilon}{|A|}$ and $s_0=\frac{d}{|B|}$. 

\begin{figure}
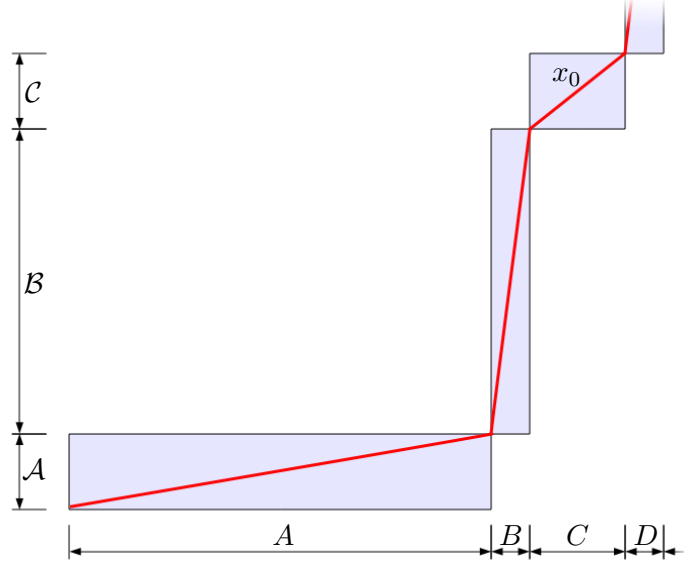

  \begin{center}
    \begin{lpic}[clear]{lador2gimpad(0.53)}
      \lbl[b]{71,11;$A$}
      \lbl[b]{127.5,11;$B$}
      \lbl[b]{144,11;$C$}
      \lbl[b]{161,11;$D$}
      \lbl[b]{9,27.5;$\mathcal{A}$}
      \lbl[b]{9,74;$\mathcal{B}$}
      \lbl[b]{9,122;$\mathcal{C}$}
      \lbl[br]{145,126;$x_0$}
    \end{lpic}
    \caption{Some time invariant subspaces of $I$ and the opinion space for a piecewise linear initial opinion function.}
    \label{box1}
  \end{center}
\end{figure}


To prove that, from the initial state $x_0$, condition (\ref{eq:overtwo}) will be satisfied, a sort of induction will be used. For every time step, the average opinion for the agents in $A$ will increase, and we will prove that this increase is at most linear in the measure of the set of agents with opinion in a certain subset of $\mathcal{B}$. We will then show that this measure will shrink quickly enough for the sum of the increments in $A$ to converge, and thereby show that there will always be agents in $A$ with opinion less than $\varepsilon$, provided $\varepsilon$ is chosen small enough. Because of symmetry, this will imply the existence of agents in $E$ with opinions greater than $2d+2\varepsilon$, and since $d>1$ this will complete the proof.

\begin{figure}
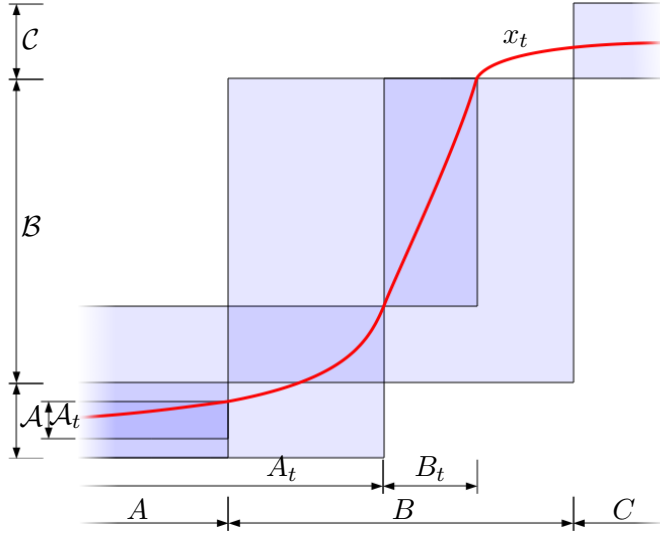

\begin{center}
    \begin{lpic}[clear]{ladorgimpad(0.5)}
      \lbl[b]{114,20;$B_t$}
      \lbl[b]{75,20;$A_t$}
      \lbl[b]{107,10;$B$}
      \lbl[b]{37,10;$A$}
      \lbl[b]{165,10;$C$}
      \lbl[b]{9,34.5;$\mathcal{A}$}
      \lbl[b]{18,34;$\mathcal{A}_t$}
      \lbl[b]{9,85;$\mathcal{B}$}
      \lbl[b]{9,134.5;$\mathcal{C}$}
      \lbl[br]{140,135;$x_t$}
    \end{lpic}
    \caption{Overview of the subsets of the agent space and the opinion space at time $t$.
      }
    \label{box2}
  \end{center}
\end{figure}


Let $A_t= x_t^{-1}([0,\, 2 \varepsilon])$ and $B_t= x_t^{-1}([2\varepsilon,\, \varepsilon+d])$ be the sets of agents with opinions in $[0,\, 2\varepsilon]$ and $[2\varepsilon,\, \varepsilon+d] $, respectively, at time $t$, see Figure \ref{box2}
. The reason for using $2\varepsilon$ instead of just $\varepsilon$ will become clear in the proof of Lemma \ref{lem:växder} below. We also let $\mathcal{A}_t= x_t(A)$. We will let $\bar{A_t}$ denote the average opinion on $A_t$ at time $t$, and a vertical bar with a subscript will denote the restriction of a function to the set in the subscript.

The precise assumptions on the function at time $t$ are the following:
\begin{description}
\item[I] $A \subseteq A_t$.\nopagebreak[4]
\item[II] $B \supseteq B_t$.\nopagebreak[4]
\item[III] $\mathcal{A}_t\subseteq \mathcal{A}$.\nopagebreak[4]
\item[IV] There is an $e_t$ such that $x_t'|_A \leq e_t$.\nopagebreak[4]
\item[V] There is an $s_t$ such that $x_t'|_{B_t} \geq s_t$.  \nopagebreak[4]
\item[VI] $\varepsilon - \bar{A_t} > 2\varepsilon^2$.\nopagebreak[4] 
\end{description}
Note that with, from our definition of $x_0$, it is clear that all but the last assumption hold for $t=0$. For the last assumption to hold $\varepsilon$ must be small enough, and it is easy to check that $\varepsilon\leq \frac{1}{4}$ is enough at $t=0$.

Under these assumptions we will prove the following lemmas:

\begin{lemma}\label{lem:nästab}
  Only the agents who at time $t$ have opinions in  $\mathcal{A}_t+1=\{\alpha+1:\alpha \in \mathcal{A}_t\}$ may end up in $B_{t+1}$, and  $\mathcal{A}_{t+1}\subset \mathcal{A}$. An immediate consequence of this is that   $A_{t+1}\supseteq A_t$ and $B_{t+1}\subseteq B_t$.
\end{lemma}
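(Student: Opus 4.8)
The plan is to exploit the order-preserving property of the Hegselmann--Krause update (standard, and in any case easily checked here). Since the neighbourhood $\mathcal{N}_t(\alpha)$, and hence the new opinion, depends on $\alpha$ only through the value $v=x_t(\alpha)$, I would set $F(v)$ to be the average of $x_t$ over $\{\beta:|x_t(\beta)-v|\le 1\}$, so that $x_{t+1}=F\circ x_t$. Because averaging a non-decreasing function over a window $[v-1,\,v+1]$ that slides to the right can only increase the average, $F$ is non-decreasing; thus $x_{t+1}$ is again non-decreasing and every set in play is an interval. The whole argument then reduces to locating the two values of $v$ at which $F(v)$ crosses the thresholds $2\varepsilon$ and $\varepsilon+d$ delimiting $B_{t+1}$. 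Throughout I would lean on the extreme measure hierarchy $|A|=|E|\gg|C|\gg|B|=|D|$ coming from $|C|=\varepsilon^2|A|$ and $|B|=|D|=\varepsilon^2|C|$, together with assumption III, which confines $\mathcal{A}_t=[a_{\min},\,a_{\max}]$ to $[0,\,\varepsilon]$.

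Next I would split the range of $v$ into three regimes. If $v\le a_{\min}+1$ then the window $[v-1,\,v+1]$ still contains all of $A$, whose mass $|A|\approx\tfrac12$ dwarfs everything else; the opinions outside $A_t$ that fall in the window have total mass at most $|B|+|C|+|D|=O(\varepsilon^2|A|)$ and lie below $2+\varepsilon$, so $F(v)\le\bar{A_t}+O(\varepsilon^2)$. Here assumption VI, in the precise form $\varepsilon-\bar{A_t}>2\varepsilon^2$, is exactly what is needed to conclude $F(v)<2\varepsilon$, so such agents land in $A_{t+1}$, not $B_{t+1}$. If instead $v>a_{\max}+1$, the window starts above $a_{\max}$ and hence excludes all of $A$; the lowest substantial mass remaining is $C$, centred at $d+\tfrac{3\varepsilon}{2}$, while $B$ and $D$ contribute only an $O(\varepsilon^2)$ relative mass, so $F(v)\ge d+\tfrac{3\varepsilon}{2}-O(\varepsilon^2)>\varepsilon+d$ and the agent overshoots $B_{t+1}$. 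Since $F$ is monotone and $[2\varepsilon,\,\varepsilon+d]$ is traversed only while $v$ runs through $(a_{\min}+1,\,a_{\max}+1]\subseteq\mathcal{A}_t+1$, this establishes the first assertion.

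For $\mathcal{A}_{t+1}\subset\mathcal{A}$ I would estimate $F$ directly on $A$: for $\alpha\in A$ the window is $[0,\,x_t(\alpha)+1]$, containing $A_t$ (average $\bar{A_t}$) plus a sliver of $B$ of mass at most $|B|=\varepsilon^4|A|$ and opinion at most $1+\varepsilon$, whence $x_{t+1}(\alpha)\le\bar{A_t}+\varepsilon^4(1+\varepsilon)<\varepsilon$ for $\varepsilon$ small, giving $\mathcal{A}_{t+1}\subset[0,\,\varepsilon)\subsetneq\mathcal{A}$. The two stated consequences then drop out: any agent of $A_t$ has opinion $\le 2\varepsilon<a_{\min}+1$, so by the first regime it stays below $2\varepsilon$ and $A_t\subseteq A_{t+1}$; and by the first assertion every agent of $B_{t+1}$ had time-$t$ opinion in $\mathcal{A}_t+1\subseteq[2\varepsilon,\,\varepsilon+d]$ (using $2\varepsilon\le 1$ and $1+\varepsilon\le\varepsilon+d$), i.e.\ it already lay in $B_t$, so $B_{t+1}\subseteq B_t$.

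The main obstacle is the upper estimate in the second regime. Its margin is only about $\varepsilon/2$ --- the gap between the centre $d+\tfrac{3\varepsilon}{2}$ of the box $\mathcal{C}$ and the threshold $\varepsilon+d$ --- and this must be shown to survive the $O(\varepsilon^2)$ downward pull exerted by the low-lying $B$-mass inside the window. This is precisely the point at which the fine calibration $|B|=|D|=\varepsilon^2|C|$ is indispensable: it makes the $B$-contribution a factor $\varepsilon^2$ smaller than the $C$-contribution, so that for all sufficiently small $\varepsilon$ the $\varepsilon/2$ margin wins. Care is also needed to check that no other position of the sliding window (for larger $v$, where it reaches into $D$ or eventually $E$) can drag $F(v)$ back into $[2\varepsilon,\,\varepsilon+d]$; but since $D$ and $E$ carry only higher opinions this can only push $F$ further up, so the binding case is indeed the one where the window has just cleared $A$.
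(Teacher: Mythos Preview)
Your proposal is correct and follows essentially the same approach as the paper: the paper examines the two boundary agents $\beta_t = x_t^{-1}(\min\mathcal{A}_t + 1)$ and $\gamma_t = x_t^{-1}(\max\mathcal{A}_t + 1)$ and shows, via the same mass comparisons you outline ($|A|$ dominating for $\beta_t$, and $|C|$ dominating over $|B|,|D|$ for $\gamma_t$), that $x_{t+1}(\beta_t)<\varepsilon$ and $x_{t+1}(\gamma_t)\geq \varepsilon+d$, then invokes monotonicity. The only cosmetic difference is that the paper extracts $\mathcal{A}_{t+1}\subset\mathcal{A}$ directly from the $\beta_t$ estimate rather than treating it as a separate computation.
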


\begin{proof}[Proof of Lemma \ref{lem:nästab}]
    We simply examine the two extreme agents in $x_t^{-1}(\mathcal{A}_t+1)$ to see that these will have opinions on opposite sides of $\mathcal{B}$ at time $t+1$. Let
    \begin{equation*}
      \beta_t = x_t^{-1}(\min ~ \mathcal{A}_t+1),~~~~
      \gamma_t = x_t^{-1}(\max~ \mathcal{A}_t+1).
    \end{equation*}

    Firstly, we observe that the agent $\gamma_t$ can see all agents in $B$ and $C$, some agents in $D$, and one agent in $A$. For the function 
    \begin{equation*}
      \tilde{x}_t(\alpha)=
      \begin{cases}
        \frac{3}{2}\varepsilon+d, & \text{if } \alpha \in C,\\
        0, & \text{else}
      \end{cases}
    \end{equation*}
    we thus have that
    \begin{equation}
      \label{eq:lillaeps1}
      x_{t+1}(\gamma_t) \geq
      \frac{1}{|\mathcal{N}_t(\gamma_t)|} \int_{\mathcal{N}_t(\gamma_t)}\tilde{x}_t(\alpha) d\alpha \geq
      \frac{|C|(\frac{3}{2}\varepsilon+d)}{|B|+|C|+|D|} =
      \frac{|C|(\frac{3}{2}\varepsilon+d)}{|C|(2\varepsilon^2+1)} \geq
      \varepsilon+d
    \end{equation}
    if\footnote{\label{fotnot.}To compute this bound we use $d=\frac{3}{2}$} $\varepsilon \leq \frac{1}{4}(\sqrt{13}-3)$, so $x_{t+1}(\gamma_t)\not\in \mathcal{B}$. The second inequality follows precisely from the fact that $\gamma_t$ cannot see any agents in $D$, and only one agent in $A$ that is also in $B$, which implies $\mathcal{N}_t(\gamma_t) \subseteq B\cup C\cup D$.

    Secondly, we observe that the agent $\beta_t$ can see all agents in $A$, $B$ and $C$, and some of the agents in $D$. The function
    \begin{equation*}
      \hat{x}_t(\alpha)=
      \begin{cases}
        \bar{A}_t, & \text{if } \alpha \in A_t, \\
        \varepsilon+d, & \text{if } \alpha \in B_t, \\
        \frac{3}{2} \varepsilon + d, & \text{if } \alpha \in (B \setminus (A_t \cup B_t)) \cup C, \\ 
        2\varepsilon + 2 d, &  \text{if } \alpha \in D \\
      \end{cases}
    \end{equation*}
    has averages greater than or equal to $x_t$ on all the subsets of $I$ that can be seen from $\beta_t$. We can thereby use $\hat{x}_t$ to get the following bound:
    \begin{align}
       x_{t+1}(\beta_t) \leq
      \frac{1}{|\mathcal{N}_t(\beta_t)|}\int_{A \cup B \cup C \cup D} \hat{x}_t(\alpha) d \alpha \leq \nonumber\\
      \leq \frac{|A_t|\bar{A_t}+\varepsilon^4|A_t|(d+\varepsilon)+\varepsilon^2|A_t|(\frac{3}{2}\varepsilon+d)+\varepsilon^4|A_t|(2\varepsilon+2d)}{|A_t|} < \nonumber\\
      < \bar{A}_t + 3 \varepsilon^5 + 3 d \varepsilon^4 +\frac{3}{2} \varepsilon^3 +d\varepsilon^2 <
      \bar{A_t}+2\varepsilon^2 <
      \varepsilon \not\in \mathcal{B}
      \label{lillaeps2}
    \end{align}
for at least\footnotemark[\value{footnote}] $\varepsilon \leq \frac{1}{10}$, where the last inequality is true according to Assumption VI. This also proves that $\mathcal{A}_{t+1} \subseteq \mathcal{A}$.

We have now shown that $x_{t+1}(\beta_t)$ and $x_{t+1}(\gamma_t)$ both lie outside $\mathcal{B}$, and monotonicity now completes the proof.
\end{proof}

\begin{lemma}
  \label{lem:medelökning}
  The increase in the mean opinion from $A_t$ at time $t$ to $A_{t+1}$ at time $t+1$ is at most linear in $|B_t|$. More precisely, $\bar{A}_{t+1}-\bar{A}_t \leq 4\frac{|B_t|}{|A|}$.
\end{lemma}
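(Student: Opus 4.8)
The plan is to exploit the fact that, for an agent $\alpha\in A_t$, its neighbourhood splits cleanly into all of $A_t$ together with a small piece of $B_t$, so that the only ``external'' influence raising the mean of $A_t$ comes through agents in $B_t$. First I would record the neighbourhood structure. Since every opinion in $A_t$ lies in $[0,2\varepsilon]$, any two agents of $A_t$ differ by at most $2\varepsilon\le1$, whence $A_t\subseteq\mathcal{N}_t(\alpha)$ for every $\alpha\in A_t$; in particular $\mu(\mathcal{N}_t(\alpha))\ge|A_t|\ge|A|$ by Assumption I. On the other hand an agent of $A_t$ can only see opinions in $[0,2\varepsilon+1]$, and for $\varepsilon\le\tfrac12$ one has $2\varepsilon+1\le d+\varepsilon$, so by monotonicity of $x_t$ we get $\mathcal{N}_t(\alpha)\subseteq A_t\cup B_t$ (no agent of $C\cup D\cup E$ is visible). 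Hence, writing $b(\alpha):=\mu(\mathcal{N}_t(\alpha)\cap B_t)$, for $\alpha\in A_t$ we may express
\[
x_{t+1}(\alpha)=\frac{\int_{A_t}x_t+\int_{\mathcal{N}_t(\alpha)\cap B_t}x_t}{|A_t|+b(\alpha)}.
\]

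Next I would estimate the pointwise increase by a change-of-weighted-average computation. Since $\int_{A_t}x_t=|A_t|\,\bar{A}_t$, subtracting $\bar{A}_t$ makes the $A_t$-part cancel, leaving
\[
x_{t+1}(\alpha)-\bar{A}_t=\frac{\int_{\mathcal{N}_t(\alpha)\cap B_t}\bigl(x_t(\beta)-\bar{A}_t\bigr)\,d\beta}{|A_t|+b(\alpha)}\le\frac{2\,b(\alpha)}{|A|},
\]
where I use $x_t(\beta)\le d+\varepsilon<2$ on $B_t$, $\bar{A}_t\ge0$, and $|A_t|+b(\alpha)\ge|A|$. Integrating over $A_t$ and applying Fubini to the double integral, $\int_{A_t}b(\alpha)\,d\alpha=\int_{A_t}\mu(\mathcal{N}_t(\alpha)\cap B_t)\,d\alpha\le|A_t|\,|B_t|$, yields $\int_{A_t}\bigl(x_{t+1}-\bar{A}_t\bigr)\le 2\,|A_t|\,|B_t|/|A|$.

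The remaining, and genuinely fiddly, point is that the lemma compares averages over the two different sets $A_t$ and $A_{t+1}$. By Lemma \ref{lem:nästab} we have $A_{t+1}\supseteq A_t$; put $G:=A_{t+1}\setminus A_t$. Every $\alpha\in G$ satisfies $x_{t+1}(\alpha)\le2\varepsilon$, and since an update can lower an opinion by at most $1$ (i.e. $x_{t+1}(\alpha)\ge x_t(\alpha)-1$), such an $\alpha$ had $x_t(\alpha)\le1+2\varepsilon\le d+\varepsilon$, so $G\subseteq B_t$ and $|G|\le|B_t|$. Therefore
\[
\int_{A_{t+1}}x_{t+1}=\int_{A_t}x_{t+1}+\int_G x_{t+1}\le|A_t|\,\bar{A}_t+\frac{2\,|A_t|\,|B_t|}{|A|}+2\varepsilon|B_t|.
\]
Since the right-hand side is nonnegative and $|A_{t+1}|\ge|A_t|\ge|A|$, replacing $|A_{t+1}|$ by the smaller $|A_t|$ only increases the quotient, so dividing through gives
\[
\bar{A}_{t+1}-\bar{A}_t\le\frac{2|B_t|}{|A|}+\frac{2\varepsilon|B_t|}{|A|}=\frac{(2+2\varepsilon)|B_t|}{|A|}<\frac{4|B_t|}{|A|}
\]
for $\varepsilon<1$. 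I expect the main obstacle to be precisely this last bookkeeping: one must control both the measure and the opinions of the newly absorbed agents $A_{t+1}\setminus A_t$ and absorb the change of normalising constant, and it is here that the generous factor $4$ (rather than the $\approx2$ the estimate really produces) provides the needed slack.
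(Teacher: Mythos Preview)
Your proof is correct and follows essentially the same approach as the paper's: split $A_{t+1}=A_t\sqcup(A_{t+1}\setminus A_t)$, bound the updated average over $A_t$ using that the only external influence comes from at most $|B_t|$ agents with opinion below $2$, bound the contribution of the new piece $A_{t+1}\setminus A_t\subseteq B_t$ by $2\varepsilon$, and combine. The only cosmetic difference is that the paper bounds $\overline{(A_t)}_{t+1}$ by the updated opinion of the rightmost agent of $A_t$ (via monotonicity) rather than by integrating your pointwise estimate, and it cites Lemma~\ref{lem:nästab} directly for $A_{t+1}\setminus A_t\subseteq B_t$ rather than rederiving it from $|x_{t+1}-x_t|\le1$.
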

\begin{proof}[Proof of Lemma \ref{lem:medelökning}]
  Lemma \ref{lem:nästab} tells us that $A_{t+1} \supseteq A_t$, and this allows us to write $A_{t+1}= A_t \sqcup (A_{t+1} \setminus A_t)$, a disjoint union of two non-empty sets. 

  As for the first of these sets, the part of $B_t$ that is visible from $A_t$ has opinions that do not exceed $1+2\varepsilon$, and so the average of $x_{t+1}$ over $A_t$ at time $t+1$  will be
  \begin{align*}
    \overline{(A_t)}_{t+1} \leq
    \frac{|A|\bar{A}_t+|B_t|(1+2\varepsilon)}{|A_t|+|B_t|}<
    \bar{A}_t + \frac{|B_t|(1+2\varepsilon)}{|A_t|}<
    \bar{A}_t + \frac{2|B_t|}{|A|},
  \end{align*}
  where the two last inequalities use that $A\subseteq A_t$, which is true by Assumption I. Note that this bound is simply a bound on the opinion of the rightmost agent in $A_t$ at time $t+1$, that also bounds the whole average because of monotonicity.

 The average of $x_{t+1}$ over $A_{t+1} \setminus A_t$ at time $t+1$ is certainly at most $2\varepsilon$, by definition of the set $A_{t+1}$. We also know from Lemma \ref{lem:nästab} that $A_{t+1} \setminus A_t \subseteq B_t$, and we thereby get the total average
\begin{align*}
  \bar{A}_{t+1}\leq
  \frac{\int_{A_{t+1}}x_{t+1}(\alpha) d\alpha}{|A_{t+1}|} \leq
  \frac{|A_{t}|\overline{(A_t)}_{t+1}+|A_{1+t} \setminus A_t|(2\varepsilon)}{|A_{t}|+|A_{1+t} \setminus A_t|} \leq\\
  \leq
  \overline{(A_t)}_{t+1}+\frac{|B_{t}|(2\varepsilon)}{|A|} \leq
  \bar{A}_t + \frac{2|B_t|}{|A|}+\frac{|B_{t}|(2\varepsilon)}{|A|} \leq 
  \bar{A}_t + \frac{4|B_t|}{|A|},
\end{align*}
   as long as $\varepsilon\leq 1$.
\end{proof}

\begin{lemma}
\label{lem:krympder}
The derivative of $x_{t+1}$ on $A$ is bounded from above: 
\begin{equation*}
  x'_{t+1}|_{A} \leq
  e_{t+1}=
  \frac{2}{|A|}\frac{e_t}{s_t}.
\end{equation*}
Provided $s_t$ is big enough, which we will show is the case, this implies the much weaker statement $e_{t+1}\leq e_t$.
\end{lemma}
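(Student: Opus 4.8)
The plan is to write $x_{t+1}$ on $A$ as an explicit moving average of $x_t$ and then differentiate it. For $\alpha \in A$ we have $x_t(\alpha) \in \mathcal{A}_t \subseteq \mathcal{A} = [0,\,\varepsilon]$ by Assumption III, so the lower confidence threshold $x_t(\alpha)-1$ is negative, while every opinion in the system is non-negative. Hence the neighbourhood of $\alpha$ has left endpoint $0$, i.e. $\mathcal{N}_t(\alpha) = [0,\,\rho_t(\alpha)]$, where $\rho_t(\alpha)$ is the unique agent with $x_t(\rho_t(\alpha)) = x_t(\alpha)+1$. (This is well defined because regularity forces $x_t$ to be strictly increasing, so $\rho_t(\alpha) = x_t^{-1}(x_t(\alpha)+1)$.) Thus I would start from the representation
\[
  x_{t+1}(\alpha) = \frac{1}{\rho_t(\alpha)} \int_0^{\rho_t(\alpha)} x_t(\beta)\, d\beta .
\]

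Next I would differentiate this quotient in $\alpha$. Writing $F(\rho) = \int_0^{\rho} x_t(\beta)\,d\beta$, the fundamental theorem of calculus gives $F'(\rho) = x_t(\rho)$, and at $\rho = \rho_t(\alpha)$ this equals $x_t(\alpha)+1$ by the definition of $\rho_t$. The chain and quotient rules then collapse to the clean moving-average identity
\[
  x'_{t+1}(\alpha) = \frac{\rho_t'(\alpha)}{\rho_t(\alpha)}\Big[(x_t(\alpha)+1) - x_{t+1}(\alpha)\Big],
\]
valid for almost every $\alpha \in A$. Both bracketed terms are manifestly non-negative, which incidentally re-confirms that $x_{t+1}$ is non-decreasing.

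It then remains to bound the three factors. Differentiating the defining relation $x_t(\rho_t(\alpha)) = x_t(\alpha)+1$ gives $\rho_t'(\alpha) = x_t'(\alpha)/x_t'(\rho_t(\alpha))$. Here $x_t(\rho_t(\alpha)) = x_t(\alpha)+1 \in [1,\,1+\varepsilon] \subseteq [2\varepsilon,\, d+\varepsilon]$ for $\varepsilon$ small (this uses $d=\frac{3}{2}$), so $\rho_t(\alpha) \in B_t$; Assumptions IV and V then yield $x_t'(\alpha) \leq e_t$ and $x_t'(\rho_t(\alpha)) \geq s_t$, whence $\rho_t'(\alpha) \leq e_t/s_t$. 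Since $\rho_t(\alpha) \in B_t \subseteq B$ lies to the right of $A = [0,\,|A|]$, we have $\rho_t(\alpha) \geq |A|$, so $1/\rho_t(\alpha) \leq 1/|A|$. Finally $x_{t+1}(\alpha) \geq 0$ and $x_t(\alpha)+1 \leq 1+\varepsilon \leq 2$ bound the bracket by $2$. Multiplying the three estimates gives
\[
  x'_{t+1}(\alpha) \leq \frac{2}{|A|}\frac{e_t}{s_t} = e_{t+1},
\]
and the weaker claim $e_{t+1} \leq e_t$ follows the moment $s_t \geq 2/|A|$, which is to be established separately.

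The one delicate point — and the step I expect to be the real obstacle — is justifying the differentiation: one must guarantee that $\rho_t(\alpha)$ is itself differentiable in $\alpha$ and that it lands strictly inside $B_t$ rather than at a corner or flat spot of $x_t$. This is exactly where regularity of $x_t$ (almost everywhere $C^1$, with a positive lower bound on the derivative where it exists) does the work: it makes $x_t^{-1}$ well defined and differentiable almost everywhere, so the identity and estimates above hold for almost every $\alpha \in A$. Since the assertion $x'_{t+1}|_A \leq e_{t+1}$ is an almost-everywhere (essential-supremum) bound, this is all that is required.
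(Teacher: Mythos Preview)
Your argument is correct and is essentially the paper's own proof: your $\rho_t$ is the paper's $v_t$, and since $u_t\equiv 0$ on $A$ the moving-average identity you derive is exactly the specialisation of the paper's derivative formula (Lemma~\ref{lem:derivata}) to this case, after which the three bounds $\rho_t'\le e_t/s_t$, $\rho_t\ge |A|$, and bracket $\le 2$ coincide with the paper's. The only cosmetic difference is that the paper packages the differentiation step into the general Lemma~\ref{lem:derivata}, whereas you compute the special case directly.
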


\begin{lemma}
  \label{lem:växder}
  The derivative of $x_{t+1}$ on $B_{t+1}$ is bounded from below:
  \begin{equation*}
    x'_{t+1} \geq
    s_{t+1} =
    \frac{\varepsilon}{2|A|}\frac{s_t}{e_t}.
  \end{equation*}
  This gives us a bound on the size of $B_{t+1}$:
  \begin{equation*}
    |B_{t+1}| \leq
    \frac{d}{s_{t+1}}.
  \end{equation*}
\end{lemma}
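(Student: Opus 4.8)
The plan is to prove the derivative bound pointwise, by differentiating the update rule (\ref{eq:contupdate}) along $B_{t+1}$ and controlling each term that results. Fix $\alpha \in B_{t+1}$ (a point where $x_{t+1}$ is differentiable; this holds almost everywhere as the state stays regular). First I would describe the confidence window: by monotonicity of $x_t$ it is a closed interval $\mathcal{N}_t(\alpha) = [L(\alpha), R(\alpha)]$ with $x_t(L(\alpha)) = x_t(\alpha)-1$ and $x_t(R(\alpha)) = x_t(\alpha)+1$, and I set $w(\alpha) = R(\alpha) - L(\alpha)$. By Lemma \ref{lem:nästab}, $\alpha \in B_t$ and $x_t(\alpha) \in \mathcal{A}_t + 1$, so $x_t(L(\alpha)) = x_t(\alpha)-1 \in \mathcal{A}_t$, that is $L(\alpha) \in A$; and since $x_t(\alpha)+1 \in [2,\,2+\varepsilon]$ lies in $x_t(D)$ for $\varepsilon \leq \tfrac14$, we have $R(\alpha) \in D$, so the whole window sits inside $A \cup B \cup C \cup D$. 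Differentiating $x_t(L(\alpha)) = x_t(\alpha)-1$ and $x_t(R(\alpha)) = x_t(\alpha)+1$ gives $L'(\alpha) = x_t'(\alpha)/x_t'(L(\alpha))$ and $R'(\alpha) = x_t'(\alpha)/x_t'(R(\alpha))$, both strictly positive.

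Next I would differentiate the identity $w(\alpha)\,x_{t+1}(\alpha) = \int_{L(\alpha)}^{R(\alpha)} x_t(\beta)\,d\beta$, substitute the boundary values, and use $w'(\alpha) = R'(\alpha) - L'(\alpha)$ to collect terms into the clean form
\begin{equation*}
x_{t+1}'(\alpha) = \frac{R'(\alpha)\bigl(1 + x_t(\alpha) - x_{t+1}(\alpha)\bigr) + L'(\alpha)\bigl(1 - x_t(\alpha) + x_{t+1}(\alpha)\bigr)}{w(\alpha)}.
\end{equation*}
Both coefficients are nonnegative for small $\varepsilon$: since $\alpha \in B_{t+1}$ we have $x_{t+1}(\alpha) \in [2\varepsilon,\, d+\varepsilon]$, while Lemma \ref{lem:nästab} together with Assumption III gives $x_t(\alpha) \in [1,\,1+\varepsilon]$; hence $1 + x_t - x_{t+1} \geq \tfrac12 - \varepsilon \geq 0$ and, crucially, $1 - x_t + x_{t+1} \geq 1 - (1+\varepsilon) + 2\varepsilon = \varepsilon$. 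This last estimate is precisely where the threshold $2\varepsilon$ in the definition of $B_t$ earns its keep. Discarding the manifestly nonnegative $R'$-term, bounding $L'(\alpha) = x_t'(\alpha)/x_t'(L(\alpha)) \geq s_t/e_t$ via Assumptions IV and V, and bounding $w(\alpha) \leq |A|+|B|+|C|+|D| = |A|(1+\varepsilon^2+2\varepsilon^4) < 2|A|$, I obtain $x_{t+1}'(\alpha) \geq \tfrac{\varepsilon}{2|A|}\tfrac{s_t}{e_t} = s_{t+1}$.

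The measure bound then follows by integrating this estimate. As $B_{t+1}$ is an interval on which the continuous increasing function $x_{t+1}$ climbs from $2\varepsilon$ to $d+\varepsilon$, a net rise of $d - \varepsilon < d$, monotonicity yields $d > (d+\varepsilon) - 2\varepsilon = \int_{B_{t+1}} x_{t+1}'(\beta)\,d\beta \geq s_{t+1}\,|B_{t+1}|$, whence $|B_{t+1}| \leq d/s_{t+1}$. \textbf{The hard part} is the term $w'(\alpha)\bigl(x_t(\alpha)-x_{t+1}(\alpha)\bigr)$ produced by the differentiation: here $w'$ is negative (the window contracts as $\alpha$ increases, the left endpoint racing up the shallow tail faster than the right endpoint) while $x_t - x_{t+1}$ changes sign across $B_{t+1}$, so any crude absolute-value bound destroys the estimate. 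The resolution is the recombination through $w' = R' - L'$, which splits $x_{t+1}'$ into two sign-definite pieces and isolates the favorable $L'$ contribution; after that, the only quantitative inputs are the two one-sided derivative bounds of Assumptions IV and V and the $\varepsilon$-sized gap $1 - x_t + x_{t+1}$ secured by the $2\varepsilon$ cutoff.
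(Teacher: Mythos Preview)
Your argument is correct and follows essentially the same route as the paper's proof: you re-derive inline the derivative identity that the paper packages as Lemma~\ref{lem:derivata}, drop the nonnegative $R'$-term, bound $L'(\alpha)\geq s_t/e_t$ via Assumptions IV--V, bound $w(\alpha)<2|A|$ via containment of the window in $A\cup B\cup C\cup D$, and extract the factor $\varepsilon$ from $1+x_{t+1}-x_t$ using Lemma~\ref{lem:nästab} together with the $2\varepsilon$ threshold in the definition of $B_t$. The measure bound is obtained identically.
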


In the proofs of Lemmas \ref{lem:krympder} and \ref{lem:växder} the following additional lemma will be used:
\begin{lemma}
  \label{lem:derivata}
  Let $x_t$ be a regular opinion function on $I$ such that $x_t(1) - x_t (0) > 2$ and define the functions $u_t,~v_t,~w_t:I \rightarrow I$ as follows:
\[
u_t(\alpha) = 
\begin{cases}
  0, & \text{if $x_t(\alpha)\leq x_t(0)+1$},\\
  x_t^{-1}(x_t(\alpha)-1), & \text{otherwise},
\end{cases}
\]
\[
v_t(\alpha) =
\begin{cases}
  1, & \text{if $x_t(\alpha)\geq x_t(1)-1$},\\
x_t^{-1}(x_t(\alpha)+1), & \text{otherwise},
\end{cases}
\]
\[
w_t(\alpha)= v_t(\alpha)-u_t(\alpha),
\]
Then the updated function $x_{t+1}$ is regular with derivative, where it exists, given by
\begin{equation}
  \label{eq:derivekv}
 x_{t+1}' (\alpha)= \frac{1}{w_t(\alpha)}
 \left[
   u_t'(\alpha)\cdot(1 + x_{t+1}(\alpha)-x_t(\alpha))+
   v_t'(\alpha)\cdot(1 + x_t(\alpha)-x_{t+1}(\alpha))
   \right].
\end{equation}
\end{lemma}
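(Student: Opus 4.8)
The plan is to identify the averaging set as an interval and then differentiate the resulting integral formula directly. Since $x_t$ is regular it is continuous and strictly increasing, so the condition $|x_t(\beta)-x_t(\alpha)|\le 1$ is equivalent to $x_t(\alpha)-1\le x_t(\beta)\le x_t(\alpha)+1$; hence $\mathcal{N}_t(\alpha)=[u_t(\alpha),v_t(\alpha)]$ with $u_t,v_t$ exactly the functions in the statement, the clamping to $0$ and $1$ covering precisely the cases where one of the two opinion bounds leaves $x_t(I)$. Thus $w_t(\alpha)=\mu(\mathcal{N}_t(\alpha))$, and the update rule (\ref{eq:contupdate}) can be rewritten as $x_{t+1}(\alpha)\,w_t(\alpha)=\int_{u_t(\alpha)}^{v_t(\alpha)}x_t(\beta)\,d\beta$.

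First I would record the regularity of the auxiliary functions. Writing $m,M$ for the positive lower and upper bounds on $x_t'$, the inverse $x_t^{-1}$ is regular with derivative in $[1/M,1/m]$, so by the chain rule $u_t'(\alpha)=x_t'(\alpha)/x_t'(u_t(\alpha))\in[m/M,M/m]$ where $u_t(\alpha)\neq 0$, while $u_t'(\alpha)=0$ where $u_t(\alpha)=0$; in particular $u_t'\ge 0$, and symmetrically for $v_t'$. Off a null set (the transition points, and the preimages under the bi-Lipschitz maps $u_t,v_t$ of the bad set of $x_t$) the functions $u_t,v_t,w_t$ are $C^1$. Differentiating the integral identity via the fundamental theorem of calculus gives $x_t(v_t)v_t'-x_t(u_t)u_t'$; since $x_t(u_t(\alpha))u_t'(\alpha)=(x_t(\alpha)-1)u_t'(\alpha)$ uniformly (the interior region gives $x_t(u_t)=x_t(\alpha)-1$, the clamped region gives $u_t'=0$), and likewise for $v_t$, this equals $(x_t(\alpha)+1)v_t'-(x_t(\alpha)-1)u_t'$ almost everywhere. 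Differentiating the left-hand side as a product, substituting $w_t'=v_t'-u_t'$, and solving for $x_{t+1}'$ then yields, after collecting the coefficients of $u_t'$ and $v_t'$, precisely (\ref{eq:derivekv}).

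It remains to check that $x_{t+1}'$ is bounded between positive constants. Here the hypothesis $x_t(1)-x_t(0)>2$ enters: it forces $x_t(0)+1<x_t(1)-1$, so no single $\alpha$ can satisfy both $u_t(\alpha)=0$ and $v_t(\alpha)=1$; consequently at least one of $\alpha-u_t(\alpha)$, $v_t(\alpha)-\alpha$ is at least $1/M$ (a shift of $1$ in opinion requires a shift of at least $1/M$ in $\alpha$), whence $w_t\ge 1/M$. As $x_{t+1}(\alpha)$ is an average of values in $[x_t(\alpha)-1,x_t(\alpha)+1]$, the bracketed factors $1\pm(x_{t+1}-x_t)$ lie in $[0,2]$, so together with $u_t',v_t'\le M/m$ formula (\ref{eq:derivekv}) gives the upper bound $x_{t+1}'\le 4M^2/m$.

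The step I expect to be the main obstacle is the strictly positive lower bound, because on a clamped region one of $u_t',v_t'$ vanishes and one must rule out the surviving bracket factor collapsing to $0$. I would argue quantitatively, writing $1+x_t(\alpha)-x_{t+1}(\alpha)=x_t(v_t)-x_{t+1}(\alpha)=\frac{1}{w_t}\int_{u_t}^{v_t}(x_t(v_t)-x_t(\beta))\,d\beta$ and bounding the integrand below by $m(v_t-\beta)$ to obtain $1+x_t-x_{t+1}\ge m w_t/2$, and symmetrically $1+x_{t+1}-x_t\ge m w_t/2$. Feeding whichever of $u_t',v_t'$ is interior (hence $\ge m/M$) into (\ref{eq:derivekv}) then produces $x_{t+1}'\ge m^2/(2M)>0$ in every case, so $x_{t+1}$ is regular and the derivative formula holds.
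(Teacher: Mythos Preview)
Your derivation of the formula (\ref{eq:derivekv}) is essentially the paper's own computation: both differentiate the integral representation $x_{t+1}(\alpha)w_t(\alpha)=\int_{u_t(\alpha)}^{v_t(\alpha)}x_t(\beta)\,d\beta$ (the paper differentiates the quotient, you the product, which is cosmetic) and both handle the clamped regions by the same observation that when $u_t\equiv 0$ or $v_t\equiv 1$ the corresponding derivative vanishes, so the substitutions $x_t(u_t)=x_t(\alpha)-1$ and $x_t(v_t)=x_t(\alpha)+1$ may be used unconditionally.

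Where you differ is in the treatment of regularity of $x_{t+1}$. The paper simply cites Lemma~4 of \cite{BHT1} for this and moves on; you instead prove it directly from the formula, extracting explicit bounds $m^2/(2M)\le x_{t+1}'\le 4M^2/m$. Your argument is sound: the hypothesis $x_t(1)-x_t(0)>2$ guarantees that at each $\alpha$ at least one of $u_t,v_t$ is interior, which both gives $w_t\ge 1/M$ for the upper bound and ensures that one of $u_t',v_t'$ is at least $m/M$; combined with your lower bound $1\pm(x_{t+1}-x_t)\ge m w_t/2$ on the bracket factors (obtained by averaging $x_t(v_t)-x_t(\beta)\ge m(v_t-\beta)$, and valid in the clamped case too since then $x_t(v_t)\le x_t(\alpha)+1$), this yields the positive lower bound with $w_t$ cancelling neatly. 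So your proof is correct and strictly more self-contained than the paper's, at the cost of a paragraph of extra work the authors chose to outsource.
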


\begin{proof}
That $x_{t+1}$ is regular was proven in Lemma 4 of \cite{BHT1}. Assuming $x_{t+1}$, $u_t$, $v_t$ and $w_t$ are differentiable at $\alpha$, we can compute as follows:

We first use the definition in (\ref{eq:contupdate}) along with the product rule for derivatives to get
\begin{equation}
  \label{eq:summa}
  x_{t+1}^{\prime}(\alpha) =
  \frac{-w_t^{\prime}(\alpha)}{(w_t(\alpha))^2}  \int_{u_t(\alpha)}^{v_t(\alpha)} x_t(\beta) d\beta + \frac{1}{w_t(\alpha)} \frac{d}{d\alpha}   \left( \int_{u_t(\alpha)}^{v_t(\alpha)} x_t(\beta)  d\beta \right).
\end{equation}
The first term simplifies to 
\begin{equation}
\label{eq:term1}
\frac{[u_t^{\prime}(\alpha) - v_t^{\prime}(\alpha)]x_{t+1}(\alpha)}{w_t(\alpha)}
\end{equation}
and, by the chain rule, the second term can be rewritten as
\begin{equation}
\label{eq:term2}
\frac{1}{w_t(\alpha)} \left[ x_t(v_t(\alpha)) \cdot v_t^{\prime}(\alpha) - x_t(u_t(\alpha)) \cdot u_t^{\prime}(\alpha) \right].
\end{equation}
But by definition of the functions $u_t$ and $v_t$, we have 
\begin{equation*}
  \begin{array}{r}
  x_t(v_t(\alpha)) =  \begin{cases}
    x_t(\alpha) + 1, & \text{if $x_t(\alpha) \leq x_t(1) - 1$}, \\ 
    1, & \text{otherwise}, 
  \end{cases}
  \\
  \\
  x_t(u_t(\alpha)) =  \begin{cases}
    x_t(\alpha) - 1, & \text{if $x_t(\alpha) \geq x_t(0) + 1$}, \\ 0, & \text{otherwise}.
  \end{cases}
  \end{array}
\end{equation*}
We would like to substitute the values $x_t(v_t(\alpha)) = x_t(\alpha) + 1$ and $x_t(u_t(\alpha)) = x_t(\alpha) - 1$ into (\ref{eq:term2}). The former doesn't hold when $x_t(\alpha) > x_t(1) - 1$, but in this range $v_t(\alpha) = 1$ so $v_t^{\prime}(\alpha) = 0$, so we can make the substitution anyway and it doesn't matter. A similar reasoning applies to the latter substitution. Hence the right-hand side of (\ref{eq:term2}) simplifies to 
\begin{equation}
  \label{eq:term22}
  \frac{v_t^{\prime}(\alpha) \cdot (x_t(\alpha) + 1) - u_t^{\prime}(\alpha) \cdot (x_t(\alpha) - 1)}{w_t(\alpha)}.
\end{equation}
Substituting (\ref{eq:term1}) and (\ref{eq:term22}) into (\ref{eq:summa}) leads after a little computation to (\ref{eq:derivekv}).
\end{proof}

\begin{proof}[Proof of Lemma \ref{lem:krympder}]
  For agents $\alpha$ in $A$ we have that $u_t(\alpha) = 0$, so $u'_t(\alpha)=0$. Using Lemma \ref{lem:derivata} 
this gives 
  \begin{equation}
    \label{eq:beviskrympder}
    x'_{t+1}(\alpha) = 
    \frac{1+x_t(\alpha)-x_{t+1}(\alpha)}{w_t(\alpha)}v'_t(\alpha)
  \end{equation}
  for  all $\alpha \in A$. From Assumption III we know that $\mathcal{A}_t \subseteq \mathcal{A}$, so we know that all agents in $A$ can see each other and at least some agents in $B_t$, but no agents beyond $B_t$, so $w_t(\alpha)>|A|$.

To get a bound on $v'_t(\alpha)$, we use the definition of $v_t$, the chain rule, and the formula for the derivative of an inverse function:
\begin{equation*}
_t(\alpha)=
  \frac{d}{d\alpha} x_t^{-1}(x_t(\alpha)+1)=
  x'_t(\alpha) \cdot \frac{1}{x_t'(x_t^{-1}(x_t(\alpha)+1))} =
  \frac{x_t'(\alpha)}{x_t'(v_t(\alpha))}.
\end{equation*}
To bound this, first note that $\alpha \in A$ implies $x'_t(\alpha) \leq e_t$. Second, note that since we assume $\mathcal{A}_t\subseteq \mathcal{A}$, it follows that $x_t(v_t(\alpha))=[1,\,1+\varepsilon]$. In particular, $v_t(\alpha)\in B_t$, and thus $x_t'(v_t(\alpha))\geq s_t$. Putting this together results in the bound
\begin{equation}
  \label{eq:vprim}
  v'_t(\alpha) \leq \frac{e_t}{s_t}.
\end{equation}

Finally we observe that $1+x_t(\alpha)-x_{t+1}(\alpha)\leq 2$ holds trivially, and we can now insert this and (\ref{eq:vprim}) into (\ref{eq:beviskrympder}) to obtain
  \begin{equation*}
    x'_{t+1}(\alpha) \leq
    \frac{2}{|A|}\frac{e_t}{s_t},
  \end{equation*}
  as desired.
\end{proof}
\begin{proof}[Proof of Lemma \ref{lem:växder}]
  First observe that since both the  terms within brackets in (\ref{eq:derivekv}) are positive, only one of them is needed to construct a lower bound for the derivative:
  \begin{equation}
    \label{eq:bevisväxder}
    x'_{t+1}(\alpha) \geq
    \frac{1}{w_t(\alpha)}u'_t(\alpha)[1+x_{t+1}(\alpha)-x_t(\alpha)].
  \end{equation}
  We know from Assumption III and symmetry that no agent in $B_t$ can see as far as $E$, so $w_t(\alpha)\leq |A \cup B  \cup C \cup D|<2|A|$. The first statement in Lemma \ref{lem:nästab} and the definition of $B_t$ together assure us that $(1+x_{t+1}(\alpha)-x_{t}(\alpha))\geq \varepsilon$: All the agents in $B_{t+1}$ must have had opinions in $\mathcal{A}_t+1$ at time $t$, or they would have ended up outside $B_{t+1}$ after the update, according to Lemma \ref{lem:nästab}. This is the motivation for using $2\varepsilon$ in the definitions of $A_t$ and $B_t$. It also lets us  use $e_t$ and $s_t$ in a way similar to what was done in the proof of Lemma \ref{lem:krympder} to get that $u'_t(\alpha) = \frac{x_t'(\alpha)}{x_t'(u_t(\alpha))} \geq \frac{s_t}{e_t}$. Applying these inequalities to (\ref{eq:bevisväxder}) gives the result.

  The upper bound on the size of $B_{t+1}$ simply comes from multiplying the inverse of the bound on the derivative with the height of $B_{t+1}$, which  we know is constantly $d-\varepsilon<d$ by construction.
\end{proof}

\begin{proof}[Proof of Theorem \ref{theorem:main}]
To begin with note that, if we were to rescale the inteval $I$ it would not affect any of our arguments. Hence, in order to simplify notation, we rescale so that $|A|=1$.

  We would like Assumptions I-VI to be true for all time steps, for then we would be done. Lemmas \ref{lem:nästab}, \ref{lem:krympder} and \ref{lem:växder} show that Assumptions I-V can be made for time $t+1$ if they hold for time $t$, provided that $\varepsilon$ is small enough. Take  $\varepsilon=\frac{1}{100}$. This is small enough for all the previous arguments to go through,
 and will be small enough for the arguments to follow.

As for  Assumption VI, Lemma \ref{lem:medelökning} tells us that the mean value $\bar{A_t}$ on the increasing sequence of sets $A_t$ depends linearly on the sizes $|B_t|$. From monotonicity and the fact that $A_t \supset A$ by Assumption I, we know that the average on $A$ at time $t$ is less than $\bar{A}_t$, so an upper bound on $\bar{A}_t$ will be enough to finish the proof. 

 To obtain a bound on $\varepsilon$ that guarantees convergence of the values $\bar{A_t}$, we will use Lemmas \ref{lem:krympder} and \ref{lem:växder}, mostly the weaker statement in Lemma \ref{lem:krympder}. At time $0$, assuming $|A|=1$, we have $e_0=\varepsilon$ and $s_0=\frac{d}{\varepsilon^4}$. At time $1$ we have $e_1=\frac{2 \varepsilon^5}{d}$ and $s_1=\frac{d}{2\varepsilon^4}$, and this will be enough to complete the argument as we get the bound
\begin{equation*}
  s_{t+1}=
  \frac{\varepsilon}{2e_t}s_t \geq
  \frac{\varepsilon}{2e_1}s_t =
  \frac{d}{4 \varepsilon^5}s_t ,
\end{equation*}
for $t\geq 1$, and hence, by iteration
\begin{equation*}
  s_{t} \geq
  \left(\frac{d}{4\varepsilon^5}\right)^{t-1}s_1,
\end{equation*}
for $t\geq 1$. This might appear like circular reasoning at a first glance: We show that the variables $s_t$ are growing using the fact that the variables $e_t$ are not, but this in turn depends on the variables $s_t$ being big enough. To see that the argument actually holds, note that the inequalities can be checked one time step at a time, and that the correctness of every step follows from the previous one.

With the bound on $|B_{t+1}|$ from Lemmas \ref{lem:växder} and \ref{lem:medelökning} we get that
\begin{equation*}
  \bar{A}_{t+1}-\bar{A}_t\leq
  4 B_t \leq
  4\frac{d}{s_1}  \left(\frac{4\varepsilon^5}{d}\right)^{t-1}=
  8 \varepsilon^4 \left(\frac{4\varepsilon^5}{d}\right)^{t-1},
\end{equation*}
for  $t\geq 1$. If $t=0$ we instead get that
\begin{equation*}
  \bar{A}_1-\bar{A}_0\leq
  4|B_0|=
  4 \varepsilon^4,
\end{equation*}
which lets us sum up the bounds on the increments and obtain
\begin{equation*}
  \lim_{t\rightarrow \infty} \bar{A}_t \leq
  \frac{\varepsilon}{2}+
  4 \varepsilon^4+
  8\varepsilon^4 \frac{1}{1-\left(\frac{4 \varepsilon^5}{d}\right)}<
  \varepsilon-2\varepsilon^2,
\end{equation*}
where the last inequality holds at least for $\varepsilon=\frac{1}{100}$. This completes the proof.
\end{proof}

\setcounter{equation}{0}

\section{Discussion}\label{sect:discuss}
When thinking about how to construct an example to prove Theorem \ref{theorem:main}, we first considered a "single-S" shape, without the narrow plateau in the middle, but with the height of the narrow strip connecting the two tails still being above two. We could not prove that the updates of such an initial state would also satisfy (\ref{eq:overtwo}), though we suspect this is the case. In fact, what we think happens when the function is updated is that a narrow plateau will form in the middle, thus yielding the "double-S" shape of the function in Section \ref{sect:proof} as an intermediate step in the evolution.

In any case, there should be even simpler examples of regular functions which satisfy (\ref{eq:overtwo}). Indeed, Conjecture \ref{conj:critical} suggests the following corresponding hypothesis for the continuous agent model:

\begin{conjecture}\label{conj:linear}
Let $x_0: [0, \, 1] \rightarrow \mathbb{R}$ be a non-decreasing linear function. Then there is a critical value $L^{*}_{c}$ such that the updates $x_t$ satisfy (\ref{eq:overtwo}) whenever $x_0(1) - x_0(0) > L^{*}_c$, whereas $x_0$ will evolve to consensus when $x_{0}(1) - x_{0}(0) < L^{*}_{c}$. Moreover, $L^{*}_{c} = L_c$, the critical value in Conjecture \ref{conj:critical}.
\end{conjecture}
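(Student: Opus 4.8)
The plan is to treat the three assertions in turn, using anti-symmetry to collapse the problem to a single scalar quantity. By the translation and reflection invariance of (\ref{eq:contupdate}) I may assume $x_0(\alpha)=L(\alpha-\tfrac12)$, so that $x_t$ is odd about $\tfrac12$ and $x_t(\tfrac12)=0$ for every $t$. The whole evolution is then encoded by the spread $D_t:=x_t(1)-x_t(0)=2x_t(1)$. As recorded in the excerpt, the process becomes constant at step $t+1$ exactly when $D_t\le 1$; otherwise $D_t>1$ for all $t$. So ``consensus'' is a finite-time event, and the consensus set $\mathcal G:=\{L:\ D_t\le 1\text{ for some }t\}$ is what I must show equals an interval $(0,L^*_c)$.

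The existence of the threshold reduces to monotonicity in $L$: if the profile of spread $L_2$ reaches consensus, so does every profile of spread $L_1<L_2$. I would prove this by a restricted comparison principle. The computation (\ref{eq:corner}) shows that a linear profile updates to an anti-symmetric sigmoid --- convex on $[0,\tfrac12]$, concave on $[\tfrac12,1]$ --- and the first step is to show that (\ref{eq:contupdate}) preserves this anti-symmetric sigmoidal class. Within the class I would then construct a coupling under which the nesting $x^{(L_1)}_t\le x^{(L_2)}_t$ on $[\tfrac12,1]$, equivalently $D^{(L_1)}_t\le D^{(L_2)}_t$ with matching level sets, is propagated by the update, using the explicit derivative formula (\ref{eq:derivekv}) of Lemma \ref{lem:derivata} on the interiors where it applies. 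The obstacle here is structural and, I expect, the crux of the whole problem: the HK map is \emph{not} monotone, precisely because the neighbourhoods $\mathcal N_t(\alpha)$ jump as agent pairs cross confidence-distance $1$. The hope is that the rigid sigmoidal geometry, together with the common monotone ordering of agents in the two profiles, forces these jumps to occur at compatible places, so that a maximum-principle argument survives across the discontinuities.

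It remains to see that above the threshold one has the strong conclusion (\ref{eq:overtwo}) rather than merely $D_t>1$. Here I would argue that the fixed central value $0$ cannot be straddled by a jump of the limiting step function: a positive-measure cluster forms at $0$, so the limit has an odd number of clusters with a central one fixed at $0$, whereupon the equilibrium separation condition behind (\ref{eq:stable}) pushes the adjacent clusters beyond distance $1$ and yields $D_\infty>2$; monotonicity of $D_t$ near the limit then gives (\ref{eq:overtwo}) for all $t$. Ruling out a central jump (the alternative ``two symmetric clusters at $\pm s$ with $\tfrac12<s<1$'') is itself delicate, and I would approach it through the sigmoidal structure, showing that the slope at the centre cannot stay bounded below along the evolution.

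For the identification $L^*_c=L_c$ I would pass to the discrete model of Conjecture \ref{conj:critical} by a hydrodynamic limit. Embedding the ordered opinions of $N$ points drawn uniformly from $[0,L]$ as a step function on $[0,1]$, Glivenko--Cantelli gives uniform convergence of this initial configuration to $x^{(L)}_0$. The analytic engine is a finite-horizon stability estimate: over any fixed time $T$ the discrete trajectory stays uniformly close to the continuous one, \emph{provided} the latter is non-critical, i.e.\ no positive-measure set of agents sits at confidence-distance exactly $1$ at any $t\le T$; under this genericity the two spreads at time $T$ agree up to $o(1)$. The two directions then split. If $L<L^*_c$, the continuous system attains $D_T\le 1$ at some finite $T$, and finite-horizon stability transfers this to the discrete system, which collapses to consensus with high probability, so $p_{L,N}\to 1$. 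If $L>L^*_c$, the continuous system converges to a stable three-cluster equilibrium, and I would use (\ref{eq:stable}) to build a trapping neighbourhood that the discrete configuration enters with high probability and cannot leave, forcing a separated equilibrium and $p_{L,N}\to 0$. This second direction is the more demanding, since it needs control for all time rather than over a fixed horizon and so rests on the attractor property of stable equilibria; in effect it delivers the existence half of Conjecture \ref{conj:critical} simultaneously, with $L_c=L^*_c$. Besides the monotonicity principle, this long-time tracking near threshold --- where both systems are marginal and non-criticality is hardest to check --- is the principal remaining obstacle.
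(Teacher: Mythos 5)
You should first be aware that the statement you were asked to prove is Conjecture \ref{conj:linear} of the paper: it appears in the Discussion section as an \emph{open problem}, and the paper offers no proof of it whatsoever (the paper's theorem concerns a different, ``double-S'' initial profile). So there is no proof of the paper's to compare yours against; the only question is whether your proposal closes the conjecture, and it does not. Each of its three stages rests on a statement that is itself open, in most cases openly flagged as such in this very paper. The comparison/coupling principle giving monotonicity of consensus in $L$ is, as you concede, unproven, and your hope that the ``sigmoidal geometry'' rescues it is unsubstantiated --- you have not even shown that the update (\ref{eq:contupdate}) preserves the convex--concave class (the computation (\ref{eq:corner}) shows one step from a linear profile, nothing about iterates, whose corners proliferate). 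Without this, the consensus set $\mathcal{G}$ need not be an interval and no threshold $L^*_c$ exists by your route. Your second stage assumes the trajectory converges a.e.\ to a finite union of clusters satisfying the stability condition (\ref{eq:stable}); that is precisely Conjecture 3 of Blondel--Hendrickx--Tsitsiklis as discussed in Section \ref{sect:intro}, which the paper states ``remains open.'' Moreover the specific claim that above threshold a positive-measure central cluster must form, giving an odd number of clusters and $D_\infty>2$, is dubious: anti-symmetry is equally consistent with two symmetric clusters, and the paper's own heuristic in Section \ref{sect:discuss} predicts exactly that --- two equal clusters separated by exactly two at the critical value --- so strictness above threshold needs an argument you do not supply.

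The third stage is the largest gap: identifying $L^*_c$ with $L_c$ via a hydrodynamic limit requires long-time (not fixed-horizon) tracking of the discrete system by the continuum system, and as you note it would prove Conjecture \ref{conj:critical} along the way --- a conjecture the paper describes as supported only by simulation and never established. A Glivenko--Cantelli initial estimate plus finite-$T$ stability gives the subcritical direction only under a non-criticality hypothesis (no positive-measure set at confidence-distance exactly one) that is hardest to verify exactly near $L_c$, where the argument is needed; the supercritical direction needs an attractor property of stable equilibria for the $N$-agent dynamics that is not in the literature cited here. There is also a small logical slip worth flagging: the conjecture's dichotomy is not exhaustive a priori --- the paper explicitly asks, as its final open question, whether a regular state can fail (\ref{eq:overtwo}) yet never reach consensus --- so even granting a threshold for consensus, you must separately exclude the intermediate regime $1 < D_t < 2$ persisting forever, which your reduction to the set $\{L : D_t \leq 1 \text{ for some } t\}$ silently ignores. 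In short: this is a reasonable research program, honestly annotated with its obstacles, but every load-bearing step is conjectural, and the statement remains a conjecture.
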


In fact, we also conjecture there will not be evolution to consensus at the critical value $L^{*}_c$. Intuitively, the reason for this is as follows. For any continuous $x_0$, the ranges $x_t(1) - x_t(0)$ of the updates will be strictly decreasing with $t$ as long as we don't have consensus. The "$2r$ conjecture" suggests that, given a linear $x_0$, there will eventually be consensus if and only if the range of opinions shrinks to strictly below two at some point. Hence, at $L = L^{*}_{c}$, we should converge almost everywhere to an equilibrium consisting of two clusters of equal measure and separated by exactly two.

This leads in turn to another obvious remaining question, namely whether it is possible for a regular initial state to fail to satisfy (\ref{eq:overtwo}) and yet never reach consensus. 

\vspace*{1cm}

\end{document}